\documentclass[11pt,reqno]{amsproc}

\usepackage[T1]{fontenc}
\usepackage{lmodern}
\usepackage{mathrsfs}
\usepackage{microtype}
\usepackage[a4paper,margin=30mm]{geometry}
\usepackage{amsmath,amssymb,mathtools}
\usepackage{aliascnt}
\usepackage{hyperref}
\usepackage[nameinlink,noabbrev]{cleveref}

\hypersetup{
  hidelinks,
  pdftitle={Reduced Embry characterizations of quasinormal operators},
  pdfauthor={Pawel Pietrzycki}
}
\numberwithin{equation}{section}

\newtheorem{theorem}{Theorem}[section]
\newaliascnt{proposition}{theorem}
\newtheorem{proposition}[proposition]{Proposition}
\aliascntresetthe{proposition}
\newaliascnt{lemma}{theorem}
\newtheorem{lemma}[lemma]{Lemma}
\aliascntresetthe{lemma}
\newaliascnt{corollary}{theorem}

\aliascntresetthe{corollary}
\crefname{theorem}{theorem}{theorems}
\Crefname{theorem}{Theorem}{Theorems}
\crefname{proposition}{proposition}{propositions}
\Crefname{proposition}{Proposition}{Propositions}
\crefname{lemma}{lemma}{lemmas}
\Crefname{lemma}{Lemma}{Lemmas}
\crefname{corollary}{corollary}{corollaries}
\Crefname{corollary}{Corollary}{Corollaries}

\newcommand{\BH}{\boldsymbol B(\mathcal H)}
\newcommand{\N}{\mathbb N}

\newcommand*{\jd}[1]{{\mathscr N}(#1)}

\newcommand*{\ran}{\mathscr{R}}

\title[Reduced Embry characterizations]
{Reduced Embry characterizations\\
of quasinormal operators}
\author{Pawe\l{} Pietrzycki}
\address{Wydzia\l{} Matematyki i Informatyki, Uniwersytet Jagiello\'nski,
  ul. \L{}ojasiewicza 6, 30-348 Krak\'ow, Poland}
\email{pawel.pietrzycki@im.uj.edu.pl}
\date{}
\subjclass[2020]{Primary 47B20; Secondary 47A63}
\keywords{Quasinormal operator, reduced Embry problem,
transformer inequality, Hansen's inequality, operator monotone function,
weighted operator geometric mean}

\begin{document}
\begin{abstract}
We prove that, for any two integers $2\le m<n$, a bounded operator
$A$ on a complex Hilbert space satisfying
\[
  A^{*k}A^k=(A^*A)^k,
  \qquad k\in\{m,n\},
\]
is quasinormal. More generally, we show that the inequalities
\[
  A^{*m}A^m\ge(A^*A)^m
  \quad \text{and} \quad
  A^{*n}A^n\le(A^*A)^n
\]
already imply quasinormality. Together with the known
single-exponent counterexamples, this completely resolves the
reduced Embry problem for bounded operators: the identities
$A^{*k}A^k=(A^*A)^k$, $k\in S\subseteq\{1,2,\ldots\}$,
characterize quasinormality if and only if $S$ contains at least
two distinct integers greater than one. The proof combines
Hansen's inequality and the transformer inequality for weighted
operator geometric means with an extremal argument for optimal
power bounds.
\end{abstract}

\maketitle

\section{Introduction}\label{sec:introduction}

A bounded operator $A$ on a (complex) Hilbert space $\mathcal H$ is
\emph{quasinormal} if $A(A^*A)=(A^*A)A$. This class was introduced
by Brown \cite{Brown}. In her work on the Halmos--Bram criterion
for subnormality, Embry \cite[p.~63]{Embry} proved that $A$ is
quasinormal if and only if
\begin{equation}\label{wst}
  A^{*k}A^k=(A^*A)^k,
  \qquad k\in\N,
\end{equation}
where $\N$ denotes the set of positive integers. The reduced Embry problem asks which
subfamilies of \eqref{wst} already imply quasinormality (see, e.g., \cite[Problem~1.4]{PietrzyckiStochel2021JFA}).
We call a set $S\subseteq\N$ \emph{forcing} if every bounded
operator on a Hilbert space satisfying
$A^{*k}A^k=(A^*A)^k$ for all $k\in S$ is quasinormal. This problem has been studied by several authors
(see \cite{Jib,JablonskiJungStochel2014,pat,Pietrzycki2016,Pietrzycki2018,PietrzyckiStochel2023,uch93,uch2}).

As noted in \cite[p.~135]{JablonskiJungStochel2014}, the fact that
$\{2,3\}$ is forcing has been known to specialists since the late
1980s. In 1993 Uchiyama \cite[Theorem~2.1]{uch93} proved that, on
separable Hilbert spaces, the identities indexed by
$\{k,k+1,\ell,\ell+1\}$ imply quasinormality whenever
$k,\ell\in\N$ and $k<\ell$. The present author generalized this
result in \cite[Theorem~3.11]{Pietrzycki2018}, showing that
$\{p,m,n,m+p,n+p\}$ is forcing for all $p,m,n\in\N$ with $m<n$,
without any separability assumption.
It was also known that the pairs $\{2,n\}$, $n\ge3$, are forcing;
this follows from \cite[Theorem~4.1]{PietrzyckiStochel2023}, since
the identity at exponent $2$ places the operator in class $A$.

Under additional assumptions on the operator, a single identity
may suffice. An early result in this direction is due to Patel
\cite[Theorem~5]{pat}, who proved that if a partial isometry $A$
satisfies $A^{*k}A^k\ge (A^*A)^k$
for some integer $k\ge2$, then $A$ is quasinormal. Uchiyama \cite[p.~198]{uch93}
showed that a compact operator satisfying
$A^{*k}A^k=(A^*A)^k$ for some integer $k\ge2$ is normal.
For arbitrary bounded operators, however, no single nontrivial
exponent suffices. Jab\l{}o\'nski, Jung and Stochel
(see \cite[Example~5.5]{JablonskiJungStochel2014}) constructed, for each
integer $k\ge2$, a bounded non-quasinormal operator satisfying
$A^{*k}A^k=(A^*A)^k$ (see also \cite{Pietrzycki2016}).
Since the identity at exponent $1$ holds for every bounded operator,
any forcing set must therefore contain at least two distinct
integers greater than one.

The main result of this paper shows that this necessary condition
is also sufficient.
\begin{theorem}
\label{cor:two-exponents}
Let $2\le m<n$ be integers and let $A\in\BH$. Then $A$ is
quasinormal if and only if
\[
  A^{*k}A^k=(A^*A)^k,
  \qquad k\in\{m,n\}.
\]
\end{theorem}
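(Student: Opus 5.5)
The plan is to reduce everything to one-step inequalities of the form $A^*P^sA$ versus $P^{s+1}$, where $P=A^*A$, and to close with an elementary identity. Only the ``if'' direction needs proof, since Embry's characterization \eqref{wst} gives the converse. As the abstract suggests, I would in fact aim at the stronger statement: $A^{*m}A^m\ge P^m$ together with $A^{*n}A^n\le P^n$ forces quasinormality. Write $Q_k=A^{*k}A^k$, so that $Q_{k+1}=A^*Q_kA$.

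\emph{The closing identity.} If $A^*PA=P^2$, then expanding gives
\[
  (PA-AP)^*(PA-AP)=A^*P^2A-A^*PAP-PA^*PA+PA^*AP=A^*P^2A-P^3 .
\]
Hence $A^*PA=P^2$ and $A^*P^2A\le P^3$ together give $PA=AP$, which is quasinormality. The whole proof therefore aims at the two one-step relations at $s=1$ and $s=2$.

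\emph{Structure of the one-step sets.} Define
\[
  J=\{s\ge0:\ A^*P^sA\le P^{s+1}\},\qquad I=\{s\ge0:\ A^*P^sA\ge P^{s+1}\}.
\]
Both sets contain $0$, because $A^*A=P$. For $s=(1-t)a+tb$ we have $P^s=P^a\#_tP^b$. The transformer inequality $A^*(X\#_tY)A\le (A^*XA)\#_t(A^*YA)$, combined with monotonicity of $\#_t$, shows that $a,b\in J$ implies $s\in J$. So $J$ is an interval containing $0$.

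For $I$ I would use Hansen's inequality. Its form $T^*f(X)T\le f(T^*XT)$ holds for contractions $T$ and operator monotone $f$; after rescaling $A$ to be a contraction, this is applied with $f(x)=x^{r}$, $0<r\le1$. This is meant to push information downward: from $A^*P^sA\ge P^{s+1}$ one should get $A^*P^{rs}A$ compared with $P^{rs+1}$. The goal is the dual statement that $I$ is also an interval from $0$, at least up to the relevant exponents.

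\emph{Translating the hypotheses.} Iterating one-step inequalities gives the following implications:
\begin{itemize}
  \item If $[0,n-1]\subseteq J$, then $Q_n\le P^n$, by induction, since $Q_{k+1}=A^*Q_kA\le A^*P^kA\le P^{k+1}$.
  \item If $[0,m-1]\subseteq I$, then $Q_m\ge P^m$.
\end{itemize}
We need the converses, and this is where the extremal argument comes in. Let $s^*$ be the optimal exponent, namely the supremum of $s$ for which the one-step bound holds (equivalently, the best power bound $Q_k\le P^{\,\cdot}$ obtainable along the chain). The plan is to show the following, using the geometric-mean interpolation between $Q_m\ge P^m$ and $Q_n\le P^n$:
\begin{itemize}
  \item if $s^*<n-1$, then $Q_n\le P^n$ could be improved past the extremal value, a contradiction with optimality;
  \item symmetrically, $Q_m\ge P^m$ rules out strict inequality on $[0,m-1]$.
\end{itemize}
Since $m\ge2$, this yields $1,2\in I\cap J$ (the point $2$ needs $n\ge3$, which holds because $n>m\ge2$). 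In particular $A^*PA=P^2$ and $A^*P^2A=P^3$, and the closing identity finishes the proof.

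\emph{Main obstacle.} The delicate step is the converse direction of the previous paragraph. The hypotheses give only inequalities between the iterated products $Q_m$, $Q_n$ and powers of $P$, and these must be converted back into one-step relations at every intermediate exponent. Here the interval structures of $I$ and $J$ have to be combined with an extremality or optimality argument. I would first try to prove a single lemma: if $Q_k\le P^k$ for some $k\ge2$ and $A^*PA\ge P^2$, then $[0,k-1]\subseteq J$. The intended proof takes the largest $s$ with $A^*P^sA\le P^{s+1}$ and derives a contradiction via Hansen's inequality applied to $t\mapsto t^{s/(s+1)}$.
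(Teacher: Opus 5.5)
\textbf{There is a genuine gap: the step that carries the content of the theorem is missing.} Two parts of your proposal are correct. The closing identity holds: if $A^*PA=P^2$, then $(PA-AP)^*(PA-AP)=A^*P^2A-P^3$, so $A^*P^2A\le P^3$ forces $PA=AP$. The convexity of $J$ via the transformer inequality is also fine. But the proposal stops exactly where the theorem's content lies. Nothing you write converts $Q_m\ge P^m$ and $Q_n\le P^n$ into the one-step relations $1\in I\cap J$ and $2\in J$. Concretely:
\begin{enumerate}
\item The ``dual'' interval structure for $I$ cannot come from Hansen's inequality, because Hansen only gives upper bounds. From $A^*P^sA\ge P^{s+1}$ you get $A^*P^{rs}A\le (A^*P^sA)^r$ and $(A^*P^sA)^r\ge P^{rs+r}$. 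These two inequalities do not compare $A^*P^{rs}A$ with $P^{rs+1}$ at all: the direction is wrong and so is the exponent.
\item The geometric-mean interpolation you invoke needs upper bounds at both ends. But $Q_m\ge P^m$ is a lower bound, and you never say how it enters. Consequently the extremal step (``$Q_n\le P^n$ could be improved past the extremal value'') has no mechanism, and a supremum of one-step exponents for $T=A$ never interacts with that hypothesis.
\item Your proposed lemma assumes $A^*PA\ge P^2$, i.e.\ $Q_2\ge P^2$. This is a hypothesis only when $m=2$, and in that case the lemma is essentially the whole theorem. For $m\ge3$ you have no route to $1\in I$.
\end{enumerate}

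\textbf{The missing idea.} As in the paper, the extremal argument should run on the iterated products rather than on one-step exponents. The lower hypothesis should be used only after conjugating by powers of $A$, where it becomes an upper bound. With $\|A\|\le1$ and $p_r=\max\{p\in[1,r]:Q_r\le P^p\}$, one has $(A^{j-1})^*P^mA^{j-1}\le Q_{m+j-1}\le P^{p_{m+j-1}}$ and $(A^k)^*P^mA^k\le Q_n\le P^n$, where $k=n-m$. Lemma~\ref{lem:power-interpolation} is then applied with $T=A^{j-1}$ and $T=A^k$, not $T=A$. This gives $p_j\ge((m-1)p_{j-1}+p_{m+j-1})/m$ and $p_{k+\ell}\ge p_k+p_\ell$. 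Taking the smallest index attaining $\min_r p_r/r$ yields a contradiction unless that minimum is $1$.

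Only after this does one-step reasoning of your kind take over, as in Proposition~\ref{thm:consecutive}. From $Q_m=P^m$ and $Q_{m+1}\le P^{m+1}$ one gets $[0,m]\subseteq J$ and exact equality at $s=m-1$ and $s=m$. This gives equality at $s=1$ only when $m=2$. For $m\ge3$ the paper closes with Theorem~\ref{bblem} at the exponents $m-1,m$. Your elementary closing identity would therefore still need an extra step producing $A^*PA=P^2$.
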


In fact, we prove the following stronger
criterion, in which the two identities are replaced by inequalities
in opposite directions.

\begin{theorem}\label{thm:all-pairs}
Let $2\le m<n$ be integers and let $A\in\BH$. Then $A$ is
quasinormal if and only if
\begin{equation}\label{eq:main-hypotheses}
  A^{*m}A^m\ge(A^*A)^m,
  \qquad
  A^{*n}A^n\le(A^*A)^n.
\end{equation}
\end{theorem}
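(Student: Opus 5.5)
The plan is to reduce both hypotheses in \eqref{eq:main-hypotheses} to inequalities for the single family of operators $A^*P^sA$, $s\ge0$, where $P:=A^*A$, and to compare each $A^*P^sA$ with $P^{s+1}$. The ``only if'' direction is immediate from Embry's characterization \eqref{wst}, so only sufficiency needs work. I write $A=U|A|$ for the polar decomposition and use
\[
  A^{*(k+1)}A^{k+1}=A^*\bigl(A^{*k}A^k\bigr)A .
\]
This identity lets an inequality at level $k$ be transported to level $k+1$: apply $X\mapsto A^*XA$, which preserves order.

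\textbf{Step 1: from the powers of $A$ to powers of $P$.} By induction on $k$, $A^{*k}A^k\ge P^k$ follows from $A^*P^{j}A\ge P^{j+1}$ for $j<k$, and conversely. I would prove a ``descent'' statement of this kind.
\begin{itemize}
\item[(a)] If $A^{*m}A^m\ge P^m$, then $A^*P^{s}A\ge P^{s+1}$ for all $s\in[0,m-1]$ (in particular $s=1$, i.e.\ class $A$).
\item[(b)] If $A^{*n}A^n\le P^n$, then $A^*P^{s}A\le P^{s+1}$ for some $s\ge 1$, and in fact on an interval $s\in[s_0,n-1]$.
\end{itemize}
The tool here is Hansen's inequality $C^*f(X)C\le f(C^*XC)$ for operator monotone $f$ and a contraction $C$, applied with $f(t)=t^{r}$, $r\in(0,1]$, and $C$ built from $U$. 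Its weighted-mean refinement is the transformer inequality
\[
  C^*(X\#_tY)C\le (C^*XC)\#_t(C^*YC),
\]
with equality when $C$ is invertible. Writing $A^*P^sA=|A|\,(U^*P^sU)\,|A|$ and $P^s=P^{a}\#_tP^{b}$ with $s=(1-t)a+tb$, the transformer inequality gives
\[
  A^*P^sA\le (A^*P^aA)\#_t(A^*P^bA).
\]
This is a log-concavity of $s\mapsto A^*P^sA$ relative to the log-linear family $s\mapsto P^{s+1}$, and it is what propagates the inequalities from the endpoints to intermediate exponents.

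\textbf{Step 2: the extremal argument.} Set
\[
  s^*=\sup\{s\ge0:\ A^*P^{r}A\ge P^{r+1}\ \text{for all } r\in[0,s]\}.
\]
By Step 1(a), $s^*\ge m-1\ge1$. Combining the upper bound at the exponents from (b) with the lower bound below $s^*$ through the geometric-mean interpolation squeezes $A^*P^sA$ between $P^{s+1}$ and $P^{s+1}$ on an interval of positive length. Hence $A^*P^sA=P^{s+1}$ there, and optimality of $s^*$ then forces equality on the whole range, in particular at $s=1$ and $s=2$. These two equalities are exactly the identities $A^{*2}A^2=P^2$ and $A^{*3}A^3=P^3$. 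Since $\{2,3\}$ is forcing, this gives quasinormality. Alternatively, I would deduce directly that $U^*PU=P$ on $\overline{\ran(|A|)}$, which yields $AP=PA$.

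\textbf{Main obstacle.} I expect the hard part to be the squeezing step, for two reasons.
\begin{itemize}
\item The transformer inequality gives only a one-sided bound, and $|A|$ need not be invertible, so kernels must be handled separately on $\jd{A}$ and $\overline{\ran(|A|)}$.
\item The two hypotheses sit at different exponents, so the interpolation must be arranged so that the lower bound (from $m$) and the upper bound (from $n$) land on the same exponent.
\end{itemize}
My first attempt would be to choose the exponents $a=s^*-\varepsilon$ and $b=n-1$, show that the mean inequality yields $A^*P^{s}A\le P^{s+1}$ for $s$ slightly larger than $s^*$, and combine this with (a) to contradict the maximality of $s^*$ unless equality already holds.
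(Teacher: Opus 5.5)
There is a genuine gap: both halves of your Step~1 are false. Neither hypothesis in \eqref{eq:main-hypotheses} \emph{by itself} gives any comparison between $A^*P^sA$ and $P^{s+1}$ for $s>0$.

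Take a unilateral weighted shift $Ae_i=\alpha_ie_{i+1}$ with $\alpha_i>0$ and put $w_i=\alpha_i^2$. Then $A^*P^sAe_i=w_iw_{i+1}^se_i$ and $P^{s+1}e_i=w_i^{s+1}e_i$. So for every $s>0$, $A^*P^sA\ge P^{s+1}$ (resp.\ $\le$) holds iff $(w_i)$ is nondecreasing (resp.\ nonincreasing). By contrast, $A^{*k}A^k\ge P^k$ only means $w_{i+1}\cdots w_{i+k-1}\ge w_i^{k-1}$.
\begin{itemize}
\item For $w=(1,\tfrac12,2,2,\dots)$ you get $A^{*3}A^3\ge P^3$ but $A^*P^sA\not\ge P^{s+1}$ for all $s>0$. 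This refutes (a) for $m=3$; for $m=2$, (a) at $s=1$ holds only because it is the hypothesis.
\item For $w=(4,1,2,\tfrac12,\tfrac12,\dots)$ you get $A^{*3}A^3\le P^3$ but $A^*P^sA\not\le P^{s+1}$ for all $s>0$. This refutes (b).
\end{itemize}
So the ``conversely'' in your induction remark is exactly what fails. Consequently $s^*$ can be $0$, and Step~2 has no input.

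Step~2 is also flawed on its own terms. The transformer inequality only produces \emph{upper} bounds $A^*P^sA\le(A^*P^aA)\#_t(A^*P^bA)$. It therefore needs upper bounds at both endpoints, but at $a=s^*-\varepsilon$ you only have a lower bound. Moreover, an upper bound beyond $s^*$ cannot contradict the maximality of $s^*$, which is a statement about lower bounds.

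The missing idea is to couple the two hypotheses at the level of the powers $A^{*r}A^r$ \emph{before} descending to one-step inequalities. The paper proceeds as follows.
\begin{itemize}
\item It normalizes $\|A\|\le1$ and introduces the optimal exponents $p_r=\max\{p\in[1,r]:A^{*r}A^r\le P^p\}$.
\item It applies your transformer-inequality interpolation (\Cref{lem:power-interpolation}) with $T=A^{j-1}$ or $T=A^{k}$, $k=n-m$, rather than $T=A$. This gives $p_j\ge\bigl((m-1)p_{j-1}+p_{m+j-1}\bigr)/m$ for $2\le j\le k$. Here $A^{*m}A^m\ge P^m$ enters in the form $(A^{j-1})^*P^mA^{j-1}\le A^{*(m+j-1)}A^{m+j-1}$.
\item The same lemma gives $p_{k+\ell}\ge p_k+p_\ell$ for $1\le\ell\le m$, which uses both hypotheses.
\item Examining the first index where $p_r/r$ attains its minimum $\theta$ shows $\theta=1$, i.e.\ $p_r=r$ for all $r\le n$. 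Hence $A^{*m}A^m=P^m$ and $A^{*(m+1)}A^{m+1}\le P^{m+1}$.
\end{itemize}
Only then does the paper pass to $\Phi(X)=U^*XU$. The bound at $m+1$ gives $\Phi(P^m)\le P^m$, and Hansen's inequality propagates this to $\Phi(P^s)\le P^s$ for $0<s\le m$. The equality at $m$ then squeezes $\Phi(P^{m-1})$ and $\Phi(P^m)$ onto $P^{m-1}$ and $P^m$, and \Cref{bblem} finishes.
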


Together with the single-exponent counterexamples, this yields
a complete classification: a set $S\subseteq\N$ is forcing if
and only if $\#(S\setminus\{1\})\ge2$. Thus, unlike the previously known sufficient configurations, no
arithmetic relation between the prescribed exponents is required.

The proof combines Hansen's inequality and the operator monotonicity of
power functions with an interpolation argument based on the transformer
inequality for weighted operator geometric means. These tools are used
to reduce the general case to that of consecutive exponents.
\subsection*{AI Disclosure} All ideas, proof strategies, and mathematical contributions in this manuscript originated exclusively with the author. OpenAI’s ChatGPT (GPT-5.6) was used solely to accelerate calculations. The author takes full responsibility for the content of the publication.

\section{Preliminaries}\label{sec:preliminaries}

Throughout the paper, all Hilbert spaces are complex and all operators
are bounded and linear. For a Hilbert space $\mathcal H$, we denote
by $\boldsymbol B(\mathcal H)$ the $C^*$-algebra of all operators on
$\mathcal H$ and by $I$ its identity. For $T\in\boldsymbol B(\mathcal H)$,
we write $|T|=(T^*T)^{1/2}$ and denote its adjoint, kernel, range and
spectrum 
by $T^*$, $\ker T$, $\operatorname{ran}T$ and $\sigma(T)$,
respectively. Powers of positive operators are understood in the sense of the
continuous functional calculus.
Recall that every $A\in\boldsymbol B(\mathcal H)$ admits a polar
decomposition $A=U|A|$, where $U$ is the unique partial
isometry satisfying $\ker U=\ker A$.

Let $J\subseteq\mathbb R$ be an interval. A continuous function
$f\colon J\to\mathbb R$ is said to be \emph{operator monotone}
if $f(R)\le f(S)$ whenever $R,S$ are selfadjoint operators on
an arbitrary Hilbert space such that $R\le S$ and
$\sigma(R)\cup\sigma(S)\subseteq J$; see \cite{Sim19}.
By the L\"owner--Heinz inequality, the function $t\mapsto t^p$
is operator monotone on $[0,\infty)$ for every $0<p\le1$.
We shall use the following inequality due to Hansen.

\begin{theorem}[{\cite{Han80}}]\label{hans}
Let $A\in\boldsymbol B(\mathcal H)$ be positive,
$T\in\boldsymbol B(\mathcal H)$ be a contraction, and
$f\colon[0,\infty)\to\mathbb R$ be a continuous operator monotone
function such that $f(0)\ge0$. Then
\begin{equation}\label{Han-inq}
  T^*f(A)T\le f(T^*AT).
\end{equation}
\end{theorem}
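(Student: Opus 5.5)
The plan is the dilation argument of Hansen and Pedersen: embed the contraction $T$ as a corner of a unitary on $\mathcal H\oplus\mathcal H$, use that the functional calculus commutes with unitary conjugation, and then pass back to the corner using operator concavity of $f$. Set $D_T=(I-T^*T)^{1/2}$ and $D_{T^*}=(I-TT^*)^{1/2}$, and consider
\[
  U=\begin{pmatrix} T & D_{T^*}\\ D_T & -T^*\end{pmatrix}
  \in\boldsymbol B(\mathcal H\oplus\mathcal H).
\]
The intertwining relation $TD_T=D_{T^*}T$ shows that $U$ is unitary. It follows from $T(I-T^*T)=(I-TT^*)T$ by passing to square roots via polynomial approximation.

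Next put $X=A\oplus 0$, which is positive, so $U^*XU\ge0$ and $f$ may be applied to it. A direct computation gives
\[
  U^*XU=\begin{pmatrix} T^*AT & T^*AD_{T^*}\\ D_{T^*}AT & D_{T^*}AD_{T^*}\end{pmatrix}.
\]
Since $f(U^*XU)=U^*f(X)U$ and $f(X)=f(A)\oplus f(0)I$, the $(1,1)$ entry of $f(U^*XU)$ equals
\[
  T^*f(A)T+f(0)\,D_T^2\ \ge\ T^*f(A)T,
\]
where we use $f(0)\ge0$.

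It remains to compare this with $f(T^*AT)$. Let $V=I\oplus(-I)$, which is a selfadjoint unitary. For any selfadjoint $Y$ on $\mathcal H\oplus\mathcal H$, the average $\tfrac12(Y+VYV)$ is the block diagonal part of $Y$. Applying this to $Y=U^*XU$ and using operator concavity of $f$ on $[0,\infty)$, we get
\[
  f(T^*AT)\oplus f(D_{T^*}AD_{T^*})
  =f\bigl(\tfrac12(Y+VYV)\bigr)
  \ge\tfrac12\bigl(f(Y)+Vf(Y)V\bigr),
\]
because $f(VYV)=Vf(Y)V$. The right-hand side is the block diagonal part of $f(Y)$. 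Comparing $(1,1)$ entries and combining with the previous paragraph yields \eqref{Han-inq}.

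The main obstacle is the operator concavity step. The hypothesis only gives operator monotonicity, so I would invoke the classical fact, due to L\"owner and to Hansen--Pedersen, that a continuous operator monotone function on $[0,\infty)$ is operator concave. This follows, for instance, from the L\"owner integral representation
\[
  f(t)=\alpha+\beta t+\int_{(0,\infty)}\frac{t\lambda}{t+\lambda}\,d\mu(\lambda),
  \qquad \beta\ge0,\ \mu\ge0,
\]
in which each integrand $t\lambda/(t+\lambda)=\lambda-\lambda^2(t+\lambda)^{-1}$ is operator concave because inversion is operator convex. If one prefers to avoid the general theorem, note that in this paper Theorem~\ref{hans} is only needed for $f(t)=t^p$ with $0<p\le1$. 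In that case concavity follows from the integral formula
\[
  t^p=\frac{\sin p\pi}{\pi}\int_0^\infty \frac{t\,\lambda^{p-1}}{t+\lambda}\,d\lambda,
\]
valid for $0<p<1$ (the case $p=1$ is trivial), together with the same argument.
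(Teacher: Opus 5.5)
The paper gives no proof of this statement; it quotes it as a black box from \cite{Han80}. Your argument therefore has to stand on its own, and it does.

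The individual steps check out:
\begin{itemize}
\item $U$ is unitary, by $TD_T=D_{T^*}T$ and its adjoint $D_TT^*=T^*D_{T^*}$.
\item The $(1,1)$ entry of $U^*f(A\oplus 0)U$ is $T^*f(A)T+f(0)D_T^2$.
\item Averaging with $V=I\oplus(-I)$, then midpoint operator concavity, then compression to the first summand, gives the sharper bound $T^*f(A)T+f(0)(I-T^*T)\le f(T^*AT)$.
\end{itemize}

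The only heavy ingredient is the fact that a continuous operator monotone function on $[0,\infty)$ is operator concave. You import this from L\"owner's integral representation. For the powers $t^p$, $0<p\le1$, which are all the paper actually needs, you import it from the explicit formula you quote, and that formula is correct.

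You can avoid concavity altogether and use only the monotonicity hypothesis as stated. Write
\[
Y=U^*(A\oplus0)U,\qquad C=T^*AD_{T^*},\qquad D=D_{T^*}AD_{T^*}.
\]
Then
\[
(T^*AT+\varepsilon I)\oplus\lambda I-Y=\begin{pmatrix}\varepsilon I&-C\\ -C^*&\lambda I-D\end{pmatrix}.
\]
By the Schur complement criterion this is positive once $\varepsilon>0$ and $\lambda\ge\|D\|+\varepsilon^{-1}\|C\|^2$. Monotonicity of $f$ and compression to the $(1,1)$ corner then give
\[
T^*f(A)T+f(0)D_T^2\le f(T^*AT+\varepsilon I).
\]
Letting $\varepsilon\downarrow0$, using norm continuity of the functional calculus, finishes the proof.

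Your route has the merit of exposing the Jensen-type structure behind the inequality. The monotone-only route is more elementary and does not require L\"owner's theorem at all.
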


The following criterion for quasinormality was established
by the present author in \cite[Lemma~3.7]{Pietrzycki2018} under
the additional assumption that $B$ is injective. The version
without this assumption is given in
\cite[Theorem~1.6]{PietrzyckiStochel2023}; see also
\cite[Remark~3.3]{PietrzyckiStochel2023}.

\begin{theorem}\label{bblem}
Let $T,B\in\boldsymbol B(\mathcal H)$, where $B\ge0$, and let
$\alpha,\beta$ be distinct positive real numbers. If $T^*T\le B$
and
\[
  T^*B^sT=B^{s+1},
  \qquad s\in\{\alpha,\beta\},
\]
then $B=T^*T$ and $T$ is quasinormal.
\end{theorem}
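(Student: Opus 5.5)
We first normalize $T$ by factoring it through $B$, then try to force a resulting contraction to be an isometry, and finally obtain commutation. Assume $\alpha<\beta$. Since $T^*T\le B$, the Douglas factorization lemma gives a contraction $W\in\boldsymbol B(\mathcal H)$ with $T=WB^{1/2}$ and $W=0$ on $\ker B$. Write $\mathcal M=\overline{\operatorname{ran}B}$. Substituting into the hypotheses gives
\[
B^{1/2}W^*B^sWB^{1/2}=B^{s+1}=B^{1/2}B^sB^{1/2},\qquad s\in\{\alpha,\beta\}.
\]
Because $B^{1/2}$ has dense range in $\mathcal M$, we can cancel it. Compressing to $\mathcal M$, we obtain $P_{\mathcal M}W^*B^sW|_{\mathcal M}=B^s|_{\mathcal M}$ for $s\in\{\alpha,\beta\}$. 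From $T^*B^sT=B^{s+1}$ and $T^*T\le B$ one also sees that $\ker B\subseteq\ker T$, and I expect $\operatorname{ran}T\subseteq\mathcal M$ to follow. This would reduce everything to $\mathcal M$, on which $B$ is injective. From now on we work on $\mathcal M$ with
\[
W^*B^\alpha W=B^\alpha,\qquad W^*B^\beta W=B^\beta,\qquad \|W\|\le1 .
\]

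The key step, and the one I expect to be the main obstacle, is to show that $W$ is an isometry on $\mathcal M$. Put $p=\alpha/\beta\in(0,1)$. By Hansen's inequality (\cref{hans}) with $f(t)=t^p$ and the contraction $W$,
\[
B^\alpha=W^*(B^\beta)^pW\le (W^*B^\beta W)^p=B^\alpha,
\]
so equality holds in Hansen's inequality. I would exploit this equality vectorwise. For $x\in\mathcal M$, compare the spectral measures of $B^\beta$ at $Wx$ and at $x$. The two measures have equal $\beta$-moments and equal $\alpha$-moments, while $\|Wx\|\le\|x\|$. Together with strict concavity of $t^p$, a Jensen-type comparison should force $\|Wx\|=\|x\|$. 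Here injectivity of $B$ on $\mathcal M$ matters, since mass at $0$ is invisible to both moments.

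If this vectorwise argument fails, I would try instead a $2\times2$ dilation. Take the unitary dilation $\begin{pmatrix}W&D\\ \ast&\ast\end{pmatrix}$ of $W$, where $D=(I-WW^*)^{1/2}$. Then use the known equality case of Hansen's inequality for non-affine operator monotone $f$: it forces $D\,B^\beta\,(I-W^*W)^{1/2}$-type cross terms to vanish, and hence $W^*W=I$ on $\operatorname{ran}B$.

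Once $W^*W=I$ on $\mathcal M$, we get $T^*T=B^{1/2}W^*WB^{1/2}=B$, which is the first assertion. For quasinormality, it is enough to show that the isometry $W$ commutes with $B$, since then $T=WB^{1/2}$ commutes with $T^*T=B$. For an isometry $V$ and $A\ge0$ with $V^*AV=A$ and $V^*A^2V=A^2$, we have
\[
\|AVx-VAx\|^2=\langle V^*A^2Vx,x\rangle-2\operatorname{Re}\langle V^*AVx,Ax\rangle+\|Ax\|^2=0 .
\]
Hence $AV=VA$. Applying this requires two exponents in ratio $2$, say $t$ and $2t$. Starting from $\alpha,\beta$ and using the isometry property, equality in Hansen's inequality should propagate the relation $W^*B^sW=B^s$ to every exponent $s\in[0,\beta]$, via operator monotone interpolation between $\alpha$ and $\beta$ (and also downward, to $s=0$). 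This would give the needed pair $\{t,2t\}$ and hence $WB^t=B^tW$. Taking a continuous function of $B^t$ then yields $WB^{1/2}=B^{1/2}W$, so $T$ is quasinormal.
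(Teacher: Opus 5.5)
The paper gives no proof of this statement; it is quoted from Lemma~3.7 of Pietrzycki (2018) and Theorem~1.6 of Pietrzycki--Stochel (2023). So your argument has to stand on its own. The Douglas factorization and your ratio-$2$ computation are fine, but the argument breaks at exactly the step you flag.

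\textbf{The vectorwise step fails.} For a fixed $x\in\mathcal M$, the hypotheses give only this about the spectral measures $\mu_x$ of $B$ at $x$ and $\nu_x$ of $B$ at $Wx$: they have equal $\alpha$- and $\beta$-moments, and $\nu_x\bigl((0,\infty)\bigr)\le\|x\|^2=\mu_x\bigl((0,\infty)\bigr)$. The equality in Hansen's inequality, tested on $x$, reduces to the same $\alpha$-moment identity and adds nothing. These constraints do not force equal masses. For $\alpha=1$, $\beta=2$, the measures $\tfrac12\delta_1+\tfrac12\delta_4$ and $\tfrac{25}{34}\delta_{17/5}$ both have first moment $\tfrac52$ and second moment $\tfrac{17}{2}$, yet their masses are $1$ and $\tfrac{25}{34}$. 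Strict concavity (H\"older) only gives $\nu_x\bigl((0,\infty)\bigr)\ge m_\alpha^{\beta/(\beta-\alpha)}m_\beta^{-\alpha/(\beta-\alpha)}$, which is strictly below $\mu_x\bigl((0,\infty)\bigr)$ unless $\mu_x$ is a point mass. Any proof must therefore use the operator identities on all of $\mathcal M$ at once.

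\textbf{The fallback is not carried out.} It invokes ``the known equality case of Hansen's inequality'', which is precisely the lemma that needs proof. It also names the wrong cross term. The off-diagonal entry of $U^*(B^\beta\oplus0)U$ is $(I-WW^*)^{1/2}B^\beta W$. By contrast, $(I-WW^*)^{1/2}B^\beta(I-W^*W)^{1/2}=0$ holds trivially for every co-isometry, so it cannot yield $W^*W=I$.

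\textbf{How to close the gap.} Replace $W$ by $W_0=P_{\mathcal M}W|_{\mathcal M}$. Since $B^s=P_{\mathcal M}B^sP_{\mathcal M}$, you get $W_0^*B^sW_0=B^s$ on $\mathcal M$ without knowing $\operatorname{ran}T\subseteq\mathcal M$.
\begin{itemize}
\item Put $D=(I-W_0^*W_0)^{1/2}$, $V=\begin{pmatrix}W_0\\ D\end{pmatrix}\colon\mathcal M\to\mathcal M\oplus\mathcal M$ (an isometry), $X=B^\beta\oplus0$, and $p=\alpha/\beta\in(0,1)$. Then $V^*XV=B^\beta$ and $V^*X^pV=B^\alpha=(V^*XV)^p$.
\item Using $t^p=\frac{\sin p\pi}{\pi}\int_0^\infty\frac{t}{t+\lambda}\lambda^{p-1}\,d\lambda$, write $(V^*XV)^p-V^*X^pV=\frac{\sin p\pi}{\pi}\int_0^\infty\lambda^{p}\bigl(V^*(X+\lambda)^{-1}V-(V^*(X+\lambda)V)^{-1}\bigr)\,d\lambda$. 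The integrand is positive (Schur complement) and norm continuous in $\lambda$, so it vanishes for every $\lambda>0$.
\item By the Schur complement formula, this vanishing holds iff $(I-VV^*)XV=0$, i.e.\ $XV=VB^\beta$. Comparing components gives $B^\beta W_0=W_0B^\beta$ and $DB^\beta=0$. Hence $D=0$, since $B^\beta$ has dense range in $\mathcal M$.
\item So $W_0$ is an isometry. This forces $\operatorname{ran}W\subseteq\mathcal M$, because $\|W_0x\|\le\|Wx\|\le\|x\|$. It also gives $T^*T=B$, and $W$ commutes with $B^\beta$, hence with $B^{1/2}$, so $T$ is quasinormal.
\end{itemize}

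This yields the commutation directly, so your final paragraph is unnecessary. As written it is also incomplete. Hansen and L\"owner--Heinz propagate $W^*B^sW=B^s$ only to $s\in[\alpha,\beta]$, which contains no pair $\{t,2t\}$ when $\beta<2\alpha$. The ``downward'' propagation to $s=0$ is asserted without argument, and it would already contain the isometry claim.
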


\section{Proof of Theorem~\ref{thm:all-pairs}}\label{sec:consecutive}
We first record an interpolation estimate that will be used twice in the
proof of the main theorem. It is a direct consequence of the transformer
inequality and the monotonicity of the weighted operator geometric mean;
see \cite{KuboAndo1980}. For $0\le\theta\le1$, let
$X\#_\theta Y$ denote the weighted operator geometric mean. We shall use
only its monotonicity and the transformer inequality
\[
  T^*(X\#_\theta Y)T
  \le (T^*XT)\#_\theta(T^*YT).
\]
For commuting positive operators, one has
\[
  X\#_\theta Y=X^{1-\theta}Y^\theta.
\]
We give two proofs of the interpolation estimate below. The first is an
immediate application of these properties of the weighted geometric mean,
whereas the second is an elementary argument based on Hansen's inequality.

\begin{lemma}\label{lem:power-interpolation}
Let $B,T\in\BH$, where $B\ge0$, and let $a,b,m>0$. If
$T^*T\le B^a$ and $T^*B^mT\le B^b$, then
\begin{equation}\label{eq:power-interpolation}
  T^*B^sT\le B^{(1-s/m)a+(s/m)b},
  \qquad 0<s\le m.
\end{equation}
\end{lemma}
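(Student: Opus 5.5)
Write $\theta=s/m\in(0,1]$. The plan is to realize $B^s$ as a weighted geometric mean of $I$ and $B^m$, and then push $T^*(\cdot)T$ through the mean using the transformer inequality. Since $I$ and $B^m$ commute, the commuting formula gives
\[
  I\#_\theta B^m = I^{1-\theta}(B^m)^\theta = B^{s}.
\]
The transformer inequality then yields
\[
  T^*B^sT = T^*(I\#_\theta B^m)T \le (T^*T)\#_\theta(T^*B^mT).
\]
By the joint monotonicity of $\#_\theta$ in both arguments, together with the hypotheses $T^*T\le B^a$ and $T^*B^mT\le B^b$, the right-hand side is at most $B^a\#_\theta B^b$. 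Because $B^a$ and $B^b$ commute, this equals $B^{(1-\theta)a}B^{\theta b}=B^{(1-\theta)a+\theta b}$, which is \eqref{eq:power-interpolation}. The case $\theta=1$ is just the second hypothesis, so nothing extra is needed there.

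The main point to watch is that the Kubo--Ando mean and the transformer inequality are usually stated for invertible (or at least positive) arguments. One must check that they remain valid for merely positive, possibly non-invertible, $B$ and $T^*T$.
\begin{itemize}
\item I would handle this by the standard approximation $X+\varepsilon I$, $Y+\varepsilon I$: the mean is defined as the strong limit of the means of these approximants, and monotonicity and the transformer inequality pass to the limit.
\item The identity $X\#_\theta Y=X^{1-\theta}Y^\theta$ for commuting $X,Y\ge 0$ also survives the limit, since it holds for the invertible approximants and both sides are continuous in $\varepsilon$.
\end{itemize}

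For the alternative elementary route via Theorem~\ref{hans}, I would first treat invertible $B$ by approximation. The inequality $T^*B^mT\le B^b$ rewrites as $C^*B^mC\le I$ with $C=T B^{-b/2}$, so $B^{m/2}C$ is a contraction. Set $\theta=s/m$, $A=C^*C$ and $P=B^{m}$. The aim is then to apply Hansen's inequality with $f(t)=t^\theta$ (operator monotone by L\"owner--Heinz) to a suitably chosen contraction, so as to compare $C^*B^{s}C$ with a power of $C^*C\le B^{a-b}$.

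The hardest part of this route is arranging the contraction and the reference operator so that the exponents combine to $(1-\theta)a+\theta b$ rather than only the endpoint cases. I expect this bookkeeping to be the main obstacle, which is why I would present the geometric-mean argument as the primary proof.
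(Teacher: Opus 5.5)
Your geometric-mean argument is correct and is exactly the paper's first proof: write $B^s=I\#_\theta B^m$, apply the transformer inequality, then use joint monotonicity and the commuting formula $B^a\#_\theta B^b=B^{(1-\theta)a+\theta b}$, with the mean extended to non-invertible arguments by downward continuity. Your unfinished Hansen route works, as in the paper's second proof, if you normalize with the first hypothesis rather than the second: $Q_\varepsilon=T(B+\varepsilon I)^{-a/2}$ is a contraction with $Q_\varepsilon^*B^mQ_\varepsilon\le(B+\varepsilon I)^{b-a}$, so Hansen's inequality and L\"owner--Heinz give $Q_\varepsilon^*B^sQ_\varepsilon\le(B+\varepsilon I)^{(b-a)s/m}$, and conjugating by $(B+\varepsilon I)^{a/2}$ and letting $\varepsilon\downarrow0$ finishes.
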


\begin{proof}[First proof of Lemma~\ref{lem:power-interpolation}]
Fix $0<s\le m$ and put $\theta=s/m$.
Let $\#_\theta$ denote the weighted operator geometric mean,
extended to all positive operators by continuity from above.
The transformer inequality and monotonicity give
\begin{align*}
  T^*B^sT
  &=T^*(I\#_\theta B^m)T\\
  &\le (T^*T)\#_\theta(T^*B^mT)\\
  &\le B^a\#_\theta B^b
   =B^{(1-\theta)a+\theta b}.
\end{align*}
This proves \eqref{eq:power-interpolation}.
\end{proof}

\begin{proof}[Second proof of Lemma~\ref{lem:power-interpolation}]
For $\varepsilon>0$, set $B_\varepsilon=B+\varepsilon I$ and
$Q_\varepsilon=TB_\varepsilon^{-a/2}$. Then
\[
  Q_\varepsilon^*Q_\varepsilon
  \le B_\varepsilon^{-a}B^a\le I,
\]
so $Q_\varepsilon$ is a contraction. Moreover,
\[
  Q_\varepsilon^*B^mQ_\varepsilon
  \le B_\varepsilon^{-a}B^b
  \le B_\varepsilon^{\,b-a}.
\]
Hence, by Hansen's inequality and the
L\"owner--Heinz inequality,
\[
  Q_\varepsilon^*B^sQ_\varepsilon
  \le \bigl(Q_\varepsilon^*B^mQ_\varepsilon\bigr)^{s/m}
  \le B_\varepsilon^{(b-a)s/m},
  \qquad 0<s\le m.
\]
Conjugating by $B_\varepsilon^{a/2}$ gives
\[
  T^*B^sT
  \le B_\varepsilon^{(1-s/m)a+(s/m)b}.
\]
Since $(1-s/m)a+(s/m)b>0$, letting $\varepsilon\downarrow0$
and using norm continuity of positive powers yields
\eqref{eq:power-interpolation}.
\end{proof}

The next proposition is the endpoint of the reduction in the following
section. 

\begin{proposition}\label{thm:consecutive}
Let $m\ge2$ be an integer  and let $A\in\BH$. If $A^{*m}A^m=(A^*A)^m$ holds and
$A^{*(m+1)}A^{m+1}\le(A^*A)^{m+1}$, then $A$ is quasinormal.
\end{proposition}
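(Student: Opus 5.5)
The plan is to reduce everything to \Cref{bblem} with $T=A$ and $B=A^*A$. The hypothesis $T^*T\le B$ holds trivially with equality, and $\alpha=m-1$, $\beta=m$ are distinct positive reals because $m\ge2$. So it suffices to prove
\[
  A^*B^{m-1}A=B^{m}
  \quad\text{and}\quad
  A^*B^{m}A=B^{m+1}.
\]
Throughout I use $A^{*(k+1)}A^{k+1}=A^*\bigl(A^{*k}A^k\bigr)A$.

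\emph{Step 1: upper bounds.} Write $A=U|A|$ and $Z_s=U^*B^sU$ for $s>0$. Then $A^*B^sA=|A|Z_s|A|$, while $B^{s+1}=|A|B^s|A|$.

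Since $U$ is a partial isometry, hence a contraction, Hansen's inequality (\Cref{hans}) with $f(t)=t^{s/m}$ gives, for $0<s\le m$,
\[
  Z_s\le Z_m^{s/m}.
\]
The hypothesis at $m+1$ reads $|A|(B^m-Z_m)|A|=A^*(A^{*m}A^m)A-A^*A^{m+1\,*}\!\cdots$; more simply, it reads
\[
  |A|(B^m-Z_m)|A|\ge0 .
\]
Let $P$ be the projection onto $\overline{\operatorname{ran}}\,|A|=(\ker U)^\perp$. Then $P$ commutes with $B$, and $Z_s=PZ_sP$. Since $|A|$ has dense range in $P\mathcal H$, the displayed inequality gives $P(B^m-Z_m)P\ge0$. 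On $\ker B=\ker P$ both $B^m$ and $Z_m$ vanish, so $Z_m\le B^m$. By Löwner--Heinz this yields $Z_m^{s/m}\le B^s$, and therefore $A^*B^sA\le B^{s+1}$ for $0<s\le m$. This could alternatively be obtained from \Cref{lem:power-interpolation} with $a=1$, $b=m+1$.

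\emph{Step 2: equality at $s=m-1$.} By induction, using Step 1 with $s=k\le m$, I get
\[
  A^{*k}A^k\le B^k,\qquad k\le m+1,
\]
since $A^{*(k+1)}A^{k+1}=A^*(A^{*k}A^k)A\le A^*B^kA\le B^{k+1}$. Applying this at $k=m-1$ gives the chain
\[
  B^m=A^{*m}A^m=A^*(A^{*(m-1)}A^{m-1})A\le A^*B^{m-1}A\le B^m .
\]
Hence $A^*B^{m-1}A=B^m$. In terms of $Z$, this is $|A|(Z_{m-1}-B^{m-1})|A|=0$. By the same density argument on $P\mathcal H$, together with vanishing on $\ker B$, it follows that $Z_{m-1}=B^{m-1}$.

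\emph{Step 3: squeezing to equality at $s=m$.} Combining Steps 1 and 2,
\[
  B^{m-1}=Z_{m-1}\le Z_m^{(m-1)/m}\le B^{m-1}.
\]
So $Z_m^{(m-1)/m}=B^{m-1}$. Taking the $m/(m-1)$ power, which is legitimate because the positive root is unique, gives $Z_m=B^m$. Hence $A^*B^mA=|A|B^m|A|=B^{m+1}$.

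Now \Cref{bblem} applies and gives that $A$ is quasinormal.

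The step I expect to need the most care is the passage between the compressed identities $|A|X|A|=0$ or $|A|X|A|\ge0$ and the statements $X=0$ or $X\ge0$ when $B$ is not invertible. This is handled by the observation that $Z_s$ and $B^s$ are both supported on $P\mathcal H$, where $|A|$ has dense range. The key idea is that the interior equality at $s=m-1$, squeezed through Hansen's inequality, propagates to the endpoint $s=m$.
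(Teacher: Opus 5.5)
Your proposal is correct and follows essentially the same route as the paper. Like the paper, you apply Hansen's inequality with $f(t)=t^{s/m}$ to $U^*(\cdot)U$, run the induction giving $A^{*j}A^j\le(A^*A)^j$, squeeze the equality at $s=m-1$ through to $s=m$, and finish with \Cref{bblem} for $\alpha=m-1$, $\beta=m$. The differences are minor: you pass from $|A|X|A|\ge0$ (or $=0$) to $X\ge0$ (or $=0$) by a density argument on $\overline{\operatorname{ran}}\,|A|$ instead of the paper's regularization by $(A^*A+\varepsilon I)^{-1/2}$, and you get $A^*B^{m-1}A=B^m$ by sandwiching before compressing, whereas the paper compresses $A^{*m}A^m=(A^*A)^m$ first.
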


\begin{proof}

Let $A=U|A|$ be the polar decomposition of $A$, and define
$\Phi\colon \boldsymbol B(\mathcal H)\to \boldsymbol B(\mathcal H)$ by
$\Phi(T)=U^*TU$. By hypothesis,
$A^*(A^*A)^mA\le (A^*A)^{m+1}$, and hence
\[
|A|\Phi((A^*A)^m)|A|
   \le (A^*A)^{m+1}.
\]
Since $\Phi((A^*A)^m)$ vanishes on $\jd{A^*A}$ and its range is contained in $\overline{\ran(A^*A)}$, conjugating the above inequality by $(A^*A+\varepsilon I)^{-1/2}$ and passing to the limit as $\varepsilon$ tends to zero gives
\begin{align*}
    \Phi((A^*A)^m)\le (A^*A)^m.
\end{align*}
Therefore, applying Hansen's inequality to the operator monotone function
$f(t)=t^{s/m}$, and then using the operator monotonicity of this function, we obtain
\begin{equation}\label{eq:consecutive-comparison}
  \Phi((A^*A)^s)
  \le \Phi((A^*A)^m)^{s/m}
  \le (A^*A)^s,
  \qquad 0<s\le m.
\end{equation}
Multiplying on both sides by $|A|$ yields
$A^*(A^*A)^sA\le (A^*A)^{s+1}$. Since
$A^{*(j+1)}A^{j+1}=A^*(A^{*j}A^j)A$, 
the estimate above yields, by induction,
\[
A^{*(j+1)}A^{j+1}
\le A^*(A^*A)^jA
\le (A^*A)^{j+1},
\qquad 1\le j<m.
\]
Hence
\[
A^{*j}A^j\le (A^*A)^j,
\qquad 1\le j\le m.
\]
On the other hand, $A^{*m}A^m=(A^*A)^m,
$ gives
\[
|A|\Phi(A^{*(m-1)}A^{m-1})|A|
   =(A^*A)^m.
\]
The same regularization argument yields
$\Phi(A^{*(m-1)}A^{m-1})=(A^*A)^{m-1}$. Consequently,
\begin{equation}\label{eq:consecutive-fixed-moments}
\begin{aligned}
  (A^*A)^{m-1}
  &=\Phi(A^{*(m-1)}A^{m-1}) \\
  &\le \Phi((A^*A)^{m-1})
   \le \Phi((A^*A)^m)^{(m-1)/m}
   \le (A^*A)^{m-1}.
\end{aligned}
\end{equation}
Thus equality holds throughout. In particular,
$\Phi((A^*A)^s)=(A^*A)^s$ for $s=m-1,m$, and hence
$A^*(A^*A)^sA=(A^*A)^{s+1}$ for these two distinct positive exponents.
The conclusion follows from \Cref{bblem}, applied with
$T=A$ and $B=A^*A$.
\end{proof}

\begin{proof}[Proof of Theorem~\ref{thm:all-pairs}]
The forward implication follows from Embry's characterization.
For the converse, by homogeneity we may and shall assume that $\|A\|\le1$.
Put $k=n-m$ and, for $1\le r\le n$, define
\begin{equation}\label{eq:optimal-exponents}
  p_r=\max\bigl\{
    p\in[1,r]: A^{*r}A^r\le(A^*A)^p
  \bigr\}.
\end{equation}
The maxima in \eqref{eq:optimal-exponents} are well defined. Indeed, since $A$ is a contraction, one has $A^{*r}A^r\le A^*A$, so the corresponding sets are nonempty.
 Moreover, the map $p\longmapsto (A^*A)^p$
is norm continuous on $[1,r]$. Hence each set in
\eqref{eq:optimal-exponents} is compact, and therefore its maximum is
attained. Consequently,
\[
  p_1=1,
  \qquad
  1\le p_r\le r,
  \qquad 1\le r\le n.
\]

We first establish two inequalities for the optimal exponents:
\begin{align}
  p_j
  &\ge
  \frac{(m-1)p_{j-1}+p_{m+j-1}}{m},
  &&2\le j\le k,
  \label{eq:optimal-feedback}\\
  p_{k+\ell}
  &\ge p_k+p_\ell,
  &&1\le\ell\le m.
  \label{eq:optimal-additivity}
\end{align}

Fix $2\le j\le k$. From the first inequality in
\eqref{eq:main-hypotheses} and the definition of
$p_{m+j-1}$, we obtain
\[
  (A^{j-1})^*(A^*A)^mA^{j-1}
  \le A^{*(m+j-1)}A^{m+j-1}
  \le (A^*A)^{p_{m+j-1}}.
\]
Applying \Cref{lem:power-interpolation} with
$T=A^{j-1}$, $B=A^*A$, $a=p_{j-1}$,
$b=p_{m+j-1}$, and $s=1$, yields
\[
  A^{*j}A^j
  \le
  (A^*A)^{((m-1)p_{j-1}+p_{m+j-1})/m}.
\]
Since
\[
  \frac{(m-1)p_{j-1}+p_{m+j-1}}{m}
  \le
  \frac{(m-1)(j-1)+(m+j-1)}{m}
  =j,
\]
the definition of $p_j$ gives
\eqref{eq:optimal-feedback}.

Next, let $1\le\ell\le m$. Since $k=n-m$, the two
inequalities in \eqref{eq:main-hypotheses} imply
\[
  (A^k)^*(A^*A)^mA^k
  \le A^{*n}A^n
  \le (A^*A)^n.
\]
As $0<p_\ell\le\ell\le m$, another application of
\Cref{lem:power-interpolation}, now with
$T=A^k$, $a=p_k$, $b=n$, and $s=p_\ell$, gives
\begin{align*}
  A^{*(k+\ell)}A^{k+\ell}
  &=(A^k)^*(A^{*\ell}A^\ell)A^k\\
  &\le (A^k)^*(A^*A)^{p_\ell}A^k\\
  &\le (A^*A)^{\,p_k+(n-p_k)p_\ell/m}\\
  &\le (A^*A)^{p_k+p_\ell}.
\end{align*}
Here the last inequality follows from
$n-p_k\ge n-k=m$ and $0\le A^*A\le I$.
Since $p_k+p_\ell\le k+\ell$, this proves
\eqref{eq:optimal-additivity}.

Set
\[
  \theta=\min_{1\le r\le n}\frac{p_r}{r}.
\]
We claim that $\theta=1$. Suppose otherwise, and let $j$ be the
smallest index for which $p_j=\theta j$. Since $p_1=1$, we have
$j\ge2$.

If $j\le k$, then $p_{j-1}>\theta(j-1)$, whereas
$p_{m+j-1}\ge\theta(m+j-1)$. Hence
\eqref{eq:optimal-feedback} gives
\[
  p_j
  \ge
  \frac{(m-1)p_{j-1}+p_{m+j-1}}{m}
  >
  \frac{(m-1)\theta(j-1)+\theta(m+j-1)}{m}
  =\theta j=p_j,
\]
a contradiction.

If $j>k$, then both $k$ and $j-k$ are strictly smaller than $j$,
and $1\le j-k\le m$. By the minimality of $j$,
\[
  p_k>\theta k,
  \qquad
  p_{j-k}>\theta(j-k).
\]
Using \eqref{eq:optimal-additivity}, we obtain
\[
  p_j
  \ge p_k+p_{j-k}
  >
  \theta k+\theta(j-k)
  =\theta j=p_j,
\]
again a contradiction. Thus $\theta=1$.

It follows that $p_r=r$ for every $1\le r\le n$. In particular,
\[
  A^{*m}A^m\le(A^*A)^m
  \quad\text{and}\quad
  A^{*(m+1)}A^{m+1}\le(A^*A)^{m+1}.
\]
The first of these, together with the first inequality in
\eqref{eq:main-hypotheses}, gives
$A^{*m}A^m=(A^*A)^m$. The conclusion now follows from
\Cref{thm:consecutive}.

\end{proof}

\end{document}